\begin{document}

\selectlanguage{english}
\fontencoding{T2A}
\selectfont

\title{Semi-scalar equivalence of polynomial matrices\thanks{ Pidstryhach Institute for Appl. 
Problems  of Mech. and Math., 
  Str. Naukova 3b, L'viv, Ukraine, 79060}}
\author{Volodymyr M. Prokip\thanks{v.prokip@gmail.com }}

\date{  }

\theoremstyle{plain}
\newtheorem{theorem}{Theorem}[section]
\newtheorem{lemma}{Lemma}[section]
\newtheorem{proposition}{Proposition}[section]
\newtheorem{corollary}{Corollary}
\newtheorem{definition}{Definition}[section]
\theoremstyle{definition}
\newtheorem{example}{Example}[section]
\newtheorem{remark}{Remark}[section]
\newcommand{\keywords}{\textbf{Keywords }\medskip}
\newcommand{\subjclass}{\textbf{MSC 2008. }\medskip}
\numberwithin{equation}{section}

\maketitle

\begin{abstract}
Polynomial $n\times n$ matrices $A(\lambda)$ and $B(\lambda)$ over a field $\mathbb F $ are 
called semi-scalar equivalent if there 
exist a nonsingular $n\times n$ matrix $P$ over the field $\mathbb F $ and an invertible $n\times n$
 matrix $Q(\lambda)$ over the ring ${\mathbb F}[\lambda]$ such that 
 $A(\lambda)=P B(\lambda)Q(\lambda).$
 The semi-scalar  equivalence of matrices over a 
field $ {\mathbb F} $ contain the problem of similarity between 
two families of matrices. 
 Therefore, these equivalences of 
matrices can be considered a difficult problem in linear algebra. 

The aim of the present paper is  to present the necessary 
and sufficient conditions of semi-scalar equivalence of nonsingular 
matrices $A(\lambda)$ and $ B(\lambda) $ over a field ${\mathbb F }$ of characteristic zero in 
terms of solutions of a homogenous system of linear equations. 
We also establish similarity of monic polynomial matrices $A(\lambda)$ and $B(\lambda)$ over a field. 
 
\end{abstract}

\subjclass  {15A21,  15A24,  65F15, 65F30 }

\keywords{Semi-scalar equivalence, PS-equivalence, Similarity of matrices}

\section{Introduction}\label{SSEQINT}
\newcommand{\diag}{{\rm diag\,} }
 \newcommand{\rank}{{\rm rank \,}}
 Let $\mathbb F$ be a field. Denote by $M_{m,n}({\mathbb F})$ the set of 
$m\times n$ matrices over $ \mathbb F $ and by $M_{m,n}( {\mathbb F }[\lambda])$ 
the set of $m\times n$ matrices over the polynomial ring $ {\mathbb F}[\lambda]$. 
A polynomial $a(\lambda) =a_0\lambda^k+a_1\lambda^{k-1}+ \dots + a_k \in {\mathbb F }(\lambda)$ 
is said to be monic if the first non-zero term $a_0$ is equal to 1.

Let $A(\lambda) \in M_{n,n}( {\mathbb F} [\lambda ])$ be a nonzero matrix and ${\rank }A(\lambda )=r$. 
Then $A(\lambda)$ is equivalent to a diagonal matrix, i.e., there exist matrices 
$P(\lambda), Q(\lambda) \in GL(n, {\mathbb F} [\lambda])$
 such that
 $$
P(\lambda) A(\lambda) Q(\lambda)= S_A(\lambda)=
\diag \begin{pmatrix} a_1(\lambda),  a_2(\lambda),  \dots \,,  a_r(\lambda),  0, \dots \,, 0 \end{pmatrix},
$$
where $a_j(\lambda)\in {\mathbb F} [\lambda]$ are  monic polynomials for all $j=1, 2, \dots , r$ 
and $a_1(\lambda) | a_2(\lambda) | \dots | a_r(\lambda)$ (divides) are the invariant factors 
of $A(\lambda)$. 
The diagonal matrix $ S_A(\lambda)$ is called the Smith normal form of  $A(\lambda)$.

\begin{definition}\label{DefSSEqu01} (See \cite{Kaz81}, Chapter 4.) Matrices $A(\lambda), B(\lambda) \in M_{n,n}({\mathbb F}[\lambda])$ 
are said to be
semi-scalar equivalent  if there exist matrices 
$P \in GL(n, {\mathbb F})$
and $Q(\lambda) \in GL(n, {\mathbb F}[\lambda])$ such that 
$A(\lambda)=P B(\lambda)Q(\lambda).$
 \end{definition}

Let $A(\lambda) \in M_{n,n}({\mathbb F} [\lambda])$ be nonsingular matrix over 
an infinite field ${\mathbb F }$. Then 
 $A(\lambda)$ is semi-scalar  equivalent to the lower triangular matrix (see \cite{Kaz81}) 
$$S_{l}(\lambda)=\left[
\begin{array}{c c c c c} s_{11}(\lambda) & 0 & \dots & \dots  & 0\\
s_{21}(\lambda) & s_{22}(\lambda)\; & 0 & \dots & 0\\
\dots  & \dots   & \dots & \dots & \dots \\
s_{n1}(\lambda)  & s_{n2}(\lambda) & \dots & s_{n,n-1}(\lambda)  & s_{nn}(\lambda)\\
 \end{array}\right]
$$
 with the following properties: 
 \begin{enumerate}
   \item[{\rm (a)}] $s_{ii}(\lambda)=s_i(\lambda)$, $i=1, 2, \ldots, n$,
	where $s_1(\lambda) | s_2(\lambda)| \cdots
|s_n(\lambda)$ (divides) are the invariant factors of $A(\lambda)$;
   \item[{\rm (b)}] $s_{ii}(\lambda)$ divides $s_{ji}(\lambda)$ for all $i, j$ with $1 \le
i <j \le n$.
   \end{enumerate}
	
	Later, the same upper triangular form was obtained in \cite{Bar}. 
 	Let ${\mathbb F}=\mathbb{Q}$ be the field of rational numbers. Consider 
	the following examples. 
	\begin{example}\label{SSEquExam01}
	For singular matrix
 $A(\lambda)=\begin{bmatrix}
          \lambda & \lambda \\
          \lambda^2+1 & \lambda^2+1\\
          \end{bmatrix}  \in M_{2,2}(\mathbb{Q}[\lambda])$ 
					there do not exist invertible matrices 
\rule{0pt}{6mm}	$P \in M_{2,2}(\mathbb{Q})$ and 
$Q(\lambda) \in M_{2,2}(\mathbb{Q}[\lambda])$ such that 
$$
   PA(\lambda)Q(\lambda)= S_l(\lambda)=
     \begin{bmatrix}
          1         & 0   \\
          * & 0 \\
          \end{bmatrix}.
$$	
Thus, for a singular matrix $A(\lambda)$, the matrix $S_l(\lambda) $  does not always exist.		
\end{example}
	
	\begin{example}\label{SSEquExam02}
	For nonsingular matrix 
  $$A(\lambda)=\begin{bmatrix}
          1 & 0 \\
          \lambda^2-\lambda & (\lambda-1)^4 
          \end{bmatrix} \in  M_{2,2}(\mathbb{Q}[\lambda]) $$ there exist invertible matrices 
					$$P=\begin{bmatrix}
          1 & 2 \\
          -2 &  -5  \\
          \end{bmatrix} \quad \mbox{  and } \quad 
Q(\lambda)=\begin{bmatrix}
          2\lambda^2-6\lambda+5 &  2(\lambda-1)^4  \\
          -2        &  -2\lambda^2+2\lambda -1 \\
          \end{bmatrix}  $$ such that
$$
   PA(\lambda)Q(\lambda)= B(\lambda)=
     \begin{bmatrix}
          1         & 0   \\
          \lambda^2-3\lambda & (\lambda-1)^4 \\
          \end{bmatrix}.
$$

Hence, matrices $A(\lambda)$ and $B(\lambda)$ are semi-scalar equivalent. 
It is evident that   $A(\lambda)$ and $B(\lambda)$  have conditions  (a) and (b) 
for semi-scalar equivalence. Thus, the matrix $S_l(\lambda)$ is defined not 
uniquely with respect to the semi-scalar equivalence for nonsingular matrix 
 $A(\lambda)$.
\end{example}

Dias da Silva and Laffey studied polynomial matrices up to 
PS-equivalence. 
\begin{definition}\label{DefSSEqu02} (See \cite{DiasLaf}.)
 Matrices $A(\lambda), B(\lambda)\in M_{n,n}({\mathbb F}[\lambda])$ are PS-equivalent if 
$ A(\lambda )=P(\lambda )B(\lambda )Q$ 
for some $P(\lambda) \in GL(n, {\mathbb F}[\lambda])$ and $Q \in GL(n,
 {\mathbb F})$.
\end{definition}

 Let ${\mathbb F }$ be an infinite field. A matrix 
 $A(\lambda) \in M_{n,n}({\mathbb F} [\lambda])$ with $\det A(\lambda) \not= 0$
 is PS-equivalent to the upper triangular matrix (see \cite{DiasLaf}, Proposition 2) 
$$S_u(\lambda)=\left[
\begin{array}{c c c c } s_{11}(\lambda)\; & s_{12}(\lambda)\; & \dots \; & s_{1n}(\lambda)\\
0 & s_{22}(\lambda)\; & \dots \; & s_{2n}(\lambda)\\
\dots  & \dots   & \dots  & \dots \\
0 & \dots \; & 0 \; & s_{nn}(\lambda)\\
 \end{array}\right]
$$
 with the following properties: 
 \begin{enumerate}
   \item[{\rm (a)}] $s_{ii}(\lambda)=s_i(\lambda)$, $i=1, 2, \dots, n$,
	where $s_1(\lambda) | s_2(\lambda)| \cdots
|s_n(\lambda)$ (divides) are the invariant factors of $A(\lambda)$;
   \item[{\rm (b)}] $s_{ii}(\lambda)$ divides $s_{ij}(\lambda)$ for all integers $i, j$ with $1 \le
i <j \le n$;
   \item[{\rm (c)}] if $i \not= j$ and $s_{ij}(\lambda)\not=0$,
then $s_{ij}(\lambda)$ is a monic polynomial and $\deg s_{ii}(\lambda) <
\deg s_{ij}(\lambda) < \deg s_{jj}(\lambda)$.
\end{enumerate}

The matrix $S_u(\lambda)$ is called a near canonical form of the matrix $A(\lambda)$ with 
respect to PS-equivalence.  We note that 
 conditions  (a) and (b) for semi-scalar 
equivalence were proved in \cite {Kaz81}. 
It is evident that matrices 
$A(\lambda), B(\lambda) \in M_{n,n}( {\mathbb F} [\lambda])$ are PS-equivalent if and
only if the transpose matrices $A^T(\lambda)$ and $B^T(\lambda)$ are 
semi-scalar equivalent. 
It is easy to make sure that 
 the matrix $S_u(\lambda)$ is not uniquely determined for the 
nonsingular matrix $A(\lambda)$ with respect to PS-equivalence (see Example \ref{SSEquExam01}).

  It is clear that  semi-scalar  equivalence and PS-equivalence  represent an equivalence relation on 
	 $M_{n,n}( {\mathbb F} [\lambda])$.
	The semi-scalar equivalence and PS-equivalence of matrices over a  
field $ {\mathbb F} $ contain the problem of similarity between 
two families of matrices (see \cite{DiasLaf, Frid, Kaz81, KazPet, Serg2000}). 
 In most cases, these problems are involved with the classic unsolvable problem of 
a canonical form of a pair of matrices over a field with respect to simultaneous similarity. 
 At present, such problems are called wild 
(\cite {Drozd77}, \cite {Drozd80}).
Thus, these equivalences of 
matrices can be considered a difficult problem in linear algebra. 
On the basis of the semi-scalar equivalence of polynomial matrices in \cite {Kaz81} 
algebraic methods for factorization of matrix polynomials  were developed. 
We note that these equivalences were used in the study of the controllability of linear systems 
 \cite{Dodig2008}.

The problem of semi-scalar equivalence of matrices  includes  the following two problems: 
(1) the determination of a complete system of invariants and  (2) the construction of 
a canonical form for a matrix  with respect to semi-scalar equivalence. But these problems 
 have  satisfactory solutions only in isolated cases. The canonical and normal 
forms with respect to semi-scalar equivalence for a matrix 
pencil $A_0\lambda + A_1 \in M_{n,n}({\mathbb F} [\lambda])$, where $A_0$  is 
nonsingular, were investigated in 
\cite{prokip2012canonical} and \cite{prokip2013normal}. More detail about 
semi-scalar equivalence and many references to the original literature can be found in 
\cite{KazBil, Melsim93, shavar2018in}.

The paper is organized as follows. In Section \ref{SSEQPRRES} we prove
preparatory results of this article. Necessary and sufficient
conditions, under which  nonsingular 
matrices $A(\lambda)$ and $ B(\lambda) $ over a field ${\mathbb F }$ of characteristic zero
 are semi-scalar equivalence are  
proposed in Section \ref{SSEQMRES}. In  Section \ref{SSEquExam}
numerical examples are also given.

\section{Preparatory  notations and results}\label{SSEQPRRES}

To prove the main result, we need the following notations and 
propositions. Let $\mathbb F$ be a field of characteristic zero. In the  
 polynomial ring 
 ${\mathbb F}[\lambda]$ we consider the operation of differentiation ${\bf D}$.

  Let $a(\lambda)=a_0\lambda^l + a_1\lambda^{l-1}+ \ldots + a_{l-1}x +a_l \in {\mathbb F}[\lambda].$ 
  Put
$$
{\bf D}\left( a(\lambda) \right)=la_0\lambda^{l-1} + (l-1)a_1\lambda^{l-2}+ \ldots +
 a_{l-1}=a^{(1)}(\lambda) $$ and
  $$  {\bf D}^k(a(\lambda))={\bf D}(a^{(k-1)}(\lambda))=a^{(k)}(\lambda)
$$
  for every natural $ k \ge 2 $.
 The differentiation of a matrix $
  A(\lambda)=\left[\begin{array}{c} a_{ij}(\lambda) \end{array}\right] \in M_{m,n}({\mathbb F} [\lambda])
$ is understood as its elementwise differentiation, i.e., 
 $$
 A^{(1)}(\lambda)={\bf
D}(A(\lambda))= \lbrack {\bf D} (a_{ij}(\lambda)) \rbrack =\lbrack
a^{(1)}_{ij}(\lambda) \rbrack $$  and $ A^{(k)}(\lambda)={\bf
D}(A^{(k-1)}(\lambda))
$ 
is the $k$-th derivative of $A(\lambda)$ for every natural $ k\geq 2.$

   Let 
$b(\lambda)=(\lambda-\beta_1)^{k_1}(\lambda-\beta_2)^{k_2}\cdots (\lambda-\beta_r)^{k_r}\in
  \mathbb { F} [\lambda]$, ${\deg \,}b(\lambda)= k=k_1+k_2+ \dots + k_r$, and
$A(\lambda)\in M_{m,n}({\mathbb F} [\lambda])$. For the monic polynomial 
$b(\lambda)$ and the matrix $A(\lambda)$ we define the matrix 
$$
 M[A, b] = \left[ \begin{array}{c}
          N_1 \\
         \rule{0pt}{5mm} N_2 \\
          \vdots \\
          N_r \\
          \end{array} \right] \in M_{mk, n}(  {\mathbb F} ), $$
          where
  $N_j=\left[ \begin{array}{c}
          A(\beta_j) \\
          \rule{0pt}{5mm}A^{(1)}(\beta_j)\\
          \vdots \\
          A^{(k_j-1)}(\beta_j)\\
       \end{array}\right] \in M_{mk_j, n}(  {\mathbb F} ),
       $ $j=1, 2, \dots, r $.

\begin{proposition}\label{LeSSE01}
Let $b(\lambda)=(\lambda-\beta_1)^{k_1}(\lambda-\beta_2)^{k_2}\cdots
(\lambda-\beta_r)^{k_r}\in   {\mathbb F} [\lambda]$, where $\beta_i \in{\mathbb F}$ for all 
$i = 1, 2, \hdots, r$, and  $A(\lambda) \in M_{m, n}(  {\mathbb F} [\lambda])$ be a nonzero matrix.
 Then $A(\lambda)$ admits the representation
\begin{equation}
A(\lambda)=b(\lambda)C(\lambda), \label{SSEquR1}
\end{equation}
if and only if $M[A, b]=0.$
\end{proposition}

\begin{proof}  Suppose that (\ref{SSEquR1}) holds. 
 It is evident that $b(\beta_j)=b^{(1)}(\beta_j)= \ldots
=b^{(k_j-1)}(\beta_j)=0 $ for all $j=1,2, \dots , r $ and 
$A(\beta_j)=0$. Differentiating equality  (\ref{SSEquR1}) $(k_j-1)$
times and substituting each time $\lambda=\beta_j$ into both sides of 
the obtained equalities, we finally obtain 
 $$
\left[
 \begin{array}{c}
A(\beta_j)\\
\rule{0pt}{5mm} A^{(1)}(\beta_j) \\
    \rule{0pt}{5mm}      A^{(2)}(\beta_j) \\
          {\vdots} \\
          A^{(k_j-1)}(\beta_j) \\
          \end{array}\right] =
     \left[ \begin{array} {c} 0   \\
                        0         \\
          0 \\
          {\vdots} \\
          0 \\
          \end{array}\right].
$$
 Thus, $N_j=0$.
Since  $1\le j \le r$, we have $M\left[ A, \; b\right]=0$.

Conversely,  let $M\left[ A, b\right]=0$. Dividing the matrix $A(\lambda )$ by 
$I_nb(\lambda )$ with residue (see, for instance, 
Theorem 7.2.1 in the classical book by Lancaster and Tismenetski \cite{LanTis}), we have 
        $$ A(\lambda )=b(\lambda )C(\lambda )+R(\lambda ), $$
where $C(\lambda ), R(\lambda ) \in M_{m, n}( {\mathbb F} [\lambda ])$ and 
$\deg R(\lambda) < \deg b(\lambda )$. Thus, $M\left[ A, b\right]=M\left[ R, b\right]=0$. 
Since $M[R, b]=0$, then $R(\lambda)=(\lambda-\beta_i)^{k_i}R_i(\lambda)$ for all 
$i = 1, 2, \hdots, r$, i. e. $R(\lambda)=b(\lambda)R_0(\lambda)$. On the other hand, 
$\deg R(\lambda) < \deg b(\lambda)$. Thus, $R(\lambda)\equiv 0$. 
This completes the proof. \end{proof}

\begin{corollary} \label{CorSSE01} Let  $A(\lambda) \in M_{n,n}(  {\mathbb F} [\lambda])$
be a matrix of ${\rank } A(\lambda) \ge n-1$ with the Smith normal form 
     $S(\lambda)={ \diag }(s_1(\lambda),  \dots, s_{n-1}(\lambda),
     s_n(\lambda)).$ 
    If $$s_{n-1}(\lambda)=(\lambda-\alpha_1)^{k_1}(\lambda-\alpha_2)^{k_2}\cdots
    (\lambda-\alpha_r)^{k_r},$$ where $\alpha_i \in   {\mathbb F} $ for all $i=1, 2, \dots,
    r$; 
     then
     $
     M[A^*, s_{n-1}]=0 .$
\end{corollary}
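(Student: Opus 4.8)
The plan is to reduce the claim to Proposition \ref{LeSSE01} by producing a factorization $A^*(\lambda)=s_{n-1}(\lambda)\,C(\lambda)$ with $C(\lambda)\in M_{n,n}(\mathbb F[\lambda])$, where $A^*(\lambda)$ denotes the adjugate (classical adjoint) of $A(\lambda)$. Once such a factorization is available, it is exactly a relation of the form (\ref{SSEquR1}) with $b(\lambda)=s_{n-1}(\lambda)$ applied to the matrix $A^*(\lambda)$, and Proposition \ref{LeSSE01} immediately yields $M[A^*,s_{n-1}]=0$. I would first note that the hypothesis $\rank A(\lambda)\ge n-1$ guarantees $A^*(\lambda)\ne 0$: at least one $(n-1)\times(n-1)$ minor of $A(\lambda)$ is nonzero, and, up to sign, these minors are precisely the entries of $A^*(\lambda)$. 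Hence Proposition \ref{LeSSE01} is indeed applicable to the nonzero matrix $A^*(\lambda)$.

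Next I would recall that each entry of $A^*(\lambda)$ equals, up to sign, a cofactor of $A(\lambda)$, that is, an $(n-1)\times(n-1)$ minor. By the theory of determinantal divisors, the greatest common divisor of all $(n-1)\times(n-1)$ minors of $A(\lambda)$ is the $(n-1)$-th determinantal divisor $d_{n-1}(\lambda)$, an invariant unchanged under left and right multiplication by unimodular matrices. Evaluating it on the diagonal Smith normal form $S(\lambda)$ gives
\[
d_{n-1}(\lambda)=s_1(\lambda)\,s_2(\lambda)\cdots s_{n-1}(\lambda).
\]
Consequently every entry of $A^*(\lambda)$ is divisible by $d_{n-1}(\lambda)$.

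Finally I would observe that, since $d_{n-1}(\lambda)$ contains $s_{n-1}(\lambda)$ as a factor, we have $s_{n-1}(\lambda)\mid d_{n-1}(\lambda)$, so $s_{n-1}(\lambda)$ divides every entry of $A^*(\lambda)$. Collecting the resulting quotients into a single matrix $C(\lambda)\in M_{n,n}(\mathbb F[\lambda])$ produces the factorization $A^*(\lambda)=s_{n-1}(\lambda)\,C(\lambda)$, which is a representation of type (\ref{SSEquR1}). Invoking Proposition \ref{LeSSE01} with $b=s_{n-1}$ then gives $M[A^*,s_{n-1}]=0$.

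I expect the only point requiring genuine care—rather than a true obstacle—to be the identification of the gcd of the $(n-1)\times(n-1)$ minors with $d_{n-1}(\lambda)=s_1\cdots s_{n-1}$; this relies on the invariance of determinantal divisors under equivalence, which lets one read the gcd directly off the diagonal Smith form. The remaining ingredients are routine: the correspondence between cofactors and entries of the adjugate, and the trivial divisibility $s_{n-1}\mid d_{n-1}$.
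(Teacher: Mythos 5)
Your proof is correct, and it reaches the same pivotal factorization $A^*(\lambda)=s_{n-1}(\lambda)\,C(\lambda)$ before invoking Proposition \ref{LeSSE01}, but you justify the divisibility differently from the paper. The paper writes $A(\lambda)=U(\lambda)S(\lambda)V(\lambda)$ with $U,V$ unimodular, uses multiplicativity of the adjugate to get $A^*(\lambda)=V^*(\lambda)S^*(\lambda)U^*(\lambda)$, and then computes $S^*(\lambda)=\diag\bigl(s_n/s_1,\dots,s_n/s_{n-1},1\bigr)\,d(\lambda)$ with $d=s_1\cdots s_{n-1}$, from which the common factor $s_{n-1}$ (indeed all of $d$) is pulled out explicitly. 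You instead appeal to the classical fact that the $(n-1)$-th determinantal divisor $d_{n-1}=s_1\cdots s_{n-1}$ is the gcd of the $(n-1)\times(n-1)$ minors, i.e.\ of the entries of $A^*(\lambda)$ up to sign, and use $s_{n-1}\mid d_{n-1}$. Both arguments rest on the same Smith-form machinery; yours is slightly more economical in that it avoids the explicit adjugate-of-a-product computation, while the paper's version has the side benefit of exhibiting the concrete matrix $C(\lambda)$ with $A^*=d\cdot C$, which is reused verbatim in the proof of Theorem \ref{TheSSE01}. Your handling of the rank hypothesis (guaranteeing $A^*\neq 0$ so that Proposition \ref{LeSSE01} applies to a nonzero matrix) matches the paper's, and your argument remains valid in the singular case $s_n=0$.
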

\begin{proof}
 Write the matrix $A(\lambda) $ as $A(\lambda)=U(\lambda)S(\lambda)V(\lambda)$, where 
$U(\lambda), V(\lambda) \in GL(n,  {\mathbb F}[\lambda])$. Then 
$A^*(\lambda)=V^*(\lambda)S^*(\lambda)U^*(\lambda)$. Put $$d(\lambda)=s_1(\lambda)s_2(\lambda) \cdots
s_{n-1}(\lambda).$$ Since ${\rank } A(\lambda) \ge n-1 $, we have 
$A^*(\lambda) \not=0 $. It is clear that 
$$
S^*(\lambda) =
  {\diag}\left( \frac {s_n(\lambda)}{s_1(\lambda)}, \cdots,
          \frac{s_n(\lambda)}{s_{n-1}(\lambda)}, 1 \right)d(\lambda). $$

 Hence, $A^{*}(\lambda)$ admits the representation 
 $A^{*}(\lambda)=s_{n-1}(\lambda)B(\lambda)$, where $B(\lambda) \in M_{n,n}( {\mathbb F} [\lambda])$.
 By virtue of Proposition~\ref{LeSSE01}, $M[A^{*}, s_{n-1}]=0 $. This completes the proof. 
\end{proof}
 The Kronecker product of matrices 
$A=\left[a_{ij} \right]$ ($n\times m$)
and $B$ is denoted by 
$$A\otimes B= \left[ \begin{array} {c c c} a_{11}B  &
 \dots  &  a_{1m}B \\
                         \vdots &   & \vdots  \\
                        a_{n1}B & \dots  &  a_{nm}B \\
\end{array} \right].$$
Let nonsingular  matrices $A(\lambda), B(\lambda) \in
M_{n,n}( {\mathbb F}[\lambda])$  be equivalent and 
$$S(\lambda)= {\diag }(s_1(\lambda),  \dots , s_{n-1}(\lambda) , s_n(\lambda))$$ 
be their  Smith normal form. For $A(\lambda)$ and $ B(\lambda)$ we define the matrix 
$$ D(\lambda)=\Bigl(\Bigl(s_1(\lambda)s_2(\lambda)
\cdots s_{n-1}(\lambda)\Bigr)^{-1}B^*(\lambda)\Bigr)\otimes A^T(\lambda) \in
M_{n^2,n^2}(   {\mathbb F} [\lambda]).
$$
It may be noted if $S(\lambda)= {\diag }(1,  \dots , 1, s(\lambda))$ is the Smith 
normal form of the matrices $A(\lambda)$ and $ B(\lambda)$, then 
$D(\lambda)=B^*(\lambda)\otimes A^T(\lambda)$.

\section{Main results }\label{SSEQMRES}

It is clear that two semi-scalar or PS-equivalent matrices are always 
equivalent. The converse of the above statement is not always true. 
The main result of this chapter is the following theorem. 

\begin{theorem}\label{TheSSE01}
Let nonsingular  matrices $A(\lambda), B(\lambda) \in M_{n,n}( {\mathbb F}[\lambda])$
be equi\-va\-lent and $S(\lambda)= {\diag}(s_1(\lambda),  \dots \;, s_{n-1}(\lambda), 
s_n(\lambda))$ be their  Smith normal form. Further, let 
 $s_{n}(\lambda)=(\lambda-\alpha_1)^{k_1}(\lambda-\alpha_2)^{k_2}\cdots
    (\lambda-\alpha_r)^{k_r},$ where $\alpha_i \in   {\mathbb F} $ for all $i=1, 2, \dots,
    r$.
Then $A(\lambda)$ and $B(\lambda)$ 
are semi-scalar equivalent if and only if ${\rank }M[D, s_n] < n^2$  and the homogeneous 
system of equations \quad $M[D, s_n] {x}= \bar{0}$ has a solution 
${x}=[v_1, v_2, \dots, v_{n^2}]^T$ over 
$ {\mathbb F}$ such that  the matrix $$V=
\left[\begin{array}{c c c c }
v_1 & v_2 & \dots \; & \; v_n \\
v_{n+1}& v_{n+2} &\dots \; &\; v_{2n}\\
\dots & \dots & \dots \; &\; \dots \\
v_{n^2-n+1}&v_{n^2-n+2} & \dots & \;v_{n^2}\\
\end{array}\right] $$ is nonsingular. 
 If $\det V \not=0$, then $VA(\lambda)= B(\lambda)Q(\lambda)$, 
where $Q(\lambda) \in GL(n, {\mathbb F} [\lambda])$.
 \end{theorem}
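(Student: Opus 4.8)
The plan is to reduce semi-scalar equivalence to a single polynomial divisibility condition and then to translate that divisibility, via vectorisation and Proposition~\ref{LeSSE01}, into the stated homogeneous linear system. First I would rewrite the defining relation $A(\lambda)=PB(\lambda)Q(\lambda)$ with $P\in GL(n,\mathbb F)$ and $Q(\lambda)\in GL(n,\mathbb F[\lambda])$, after setting $V=P^{-1}$, as the search for a nonsingular constant matrix $V$ for which $Q(\lambda)=B(\lambda)^{-1}VA(\lambda)$ is a polynomial matrix. Two simplifications then present themselves. Since $A(\lambda)$ and $B(\lambda)$ are equivalent, $\det A(\lambda)=c_A\,s_1\cdots s_n$ and $\det B(\lambda)=c_B\,s_1\cdots s_n$ with $c_A,c_B\in\mathbb F^{*}$, so $\det Q(\lambda)=\det V\cdot(c_A/c_B)\in\mathbb F^{*}$; hence invertibility of $Q(\lambda)$ over $\mathbb F[\lambda]$ is automatic once $\det V\neq0$ and $Q(\lambda)$ is polynomial. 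Moreover $\bigl(s_1\cdots s_{n-1}\bigr)^{-1}B^*(\lambda)$ is itself a polynomial matrix, because its entries are the $(n-1)\times(n-1)$ minors of $B(\lambda)$, each divisible by the $(n-1)$-th determinantal divisor $s_1\cdots s_{n-1}$ (the same fact underlying Corollary~\ref{CorSSE01}). Writing $B(\lambda)^{-1}=\bigl(c_B s_n(\lambda)\bigr)^{-1}\bigl(s_1\cdots s_{n-1}\bigr)^{-1}B^*(\lambda)$, it follows that $Q(\lambda)$ is polynomial if and only if $s_n(\lambda)$ divides every entry of $\bigl(s_1\cdots s_{n-1}\bigr)^{-1}B^*(\lambda)\,V\,A(\lambda)$.

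Next I would linearise this divisibility in the entries of $V$. Taking $x=[v_1,\dots,v_{n^2}]^T$ to be the row-wise vectorisation of $V$, a Kronecker vectorisation identity yields that $D(\lambda)x$ equals the row-wise vectorisation of the polynomial matrix $\bigl(s_1\cdots s_{n-1}\bigr)^{-1}B^*(\lambda)\,V\,A(\lambda)$, where $D(\lambda)=\bigl((s_1\cdots s_{n-1})^{-1}B^*(\lambda)\bigr)\otimes A^T(\lambda)$. Because $x$ and $V$ are constant, differentiating this identity shows $D^{(i)}(\lambda)x$ is the vectorisation of the $i$-th derivative of that same product; evaluating at each root $\alpha_j$ of $s_n(\lambda)$ to orders $0,\dots,k_j-1$ then identifies the block-stacked vector $M[D,s_n]\,x$ with the vectorisation of $M\bigl[(s_1\cdots s_{n-1})^{-1}B^*VA,\;s_n\bigr]$. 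By Proposition~\ref{LeSSE01}, $M[D,s_n]x=\bar{0}$ holds if and only if $s_n(\lambda)$ divides the product, that is, if and only if $Q(\lambda)$ is polynomial.

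With this dictionary the two implications are short. For necessity, semi-scalar equivalence supplies a nonsingular $V=P^{-1}$; the associated $x\neq\bar{0}$ solves $M[D,s_n]x=\bar{0}$, which forces $\rank M[D,s_n]<n^2$ and exhibits a solution whose reshaped matrix $V$ is nonsingular. For sufficiency, any solution $x$ with $\det V\neq0$ makes $Q(\lambda)=B(\lambda)^{-1}VA(\lambda)$ polynomial, and by the determinant computation above $\det Q(\lambda)\in\mathbb F^{*}$, so $Q(\lambda)\in GL(n,\mathbb F[\lambda])$; thus $VA(\lambda)=B(\lambda)Q(\lambda)$ and $A(\lambda)=V^{-1}B(\lambda)Q(\lambda)$ is semi-scalar equivalent to $B(\lambda)$. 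This last line is also precisely the closing assertion $VA(\lambda)=B(\lambda)Q(\lambda)$ of the theorem.

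The step I expect to be the main obstacle is the bookkeeping of the middle paragraph: pinning down the vectorisation convention so that the \emph{row-wise} reshaping of $x$ into $V$ matches the factor order $\bigl((s_1\cdots s_{n-1})^{-1}B^*\bigr)\otimes A^T$ in $D(\lambda)$ rather than the reversed order, and verifying cleanly that the derivative-and-evaluate operation defining $M[\,\cdot\,,s_n]$ commutes with left/right multiplication by the constant $V$ and with vectorisation. Everything else reduces to the determinantal-divisor remarks and a direct appeal to Proposition~\ref{LeSSE01}.
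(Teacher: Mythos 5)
Your proposal is correct and follows essentially the same route as the paper: both factor $B^*(\lambda)=s_1\cdots s_{n-1}\cdot C(\lambda)$, reduce semi-scalar equivalence to the divisibility of $C(\lambda)VA(\lambda)$ by $s_n(\lambda)$, translate this via the row-wise Kronecker vectorisation identity into $M[D,s_n]x=\bar 0$, and invoke Proposition~\ref{LeSSE01} together with the determinant computation $\det Q(\lambda)=\mathrm{const}\neq 0$. The only cosmetic difference is that the paper runs the converse through an explicit division with remainder of $C(\lambda)VA(\lambda)$ by $s_n(\lambda)$, which is exactly the mechanism already inside Proposition~\ref{LeSSE01} that you appeal to directly.
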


\begin{proof} Let nonsingular  matrices $A(\lambda)$ and $ B(\lambda)$ in 
$M_{n,n}({\mathbb F} [\lambda])$ be  semi-sca\-lar equivalent, i.e., 
  $A(\lambda) = PB(\lambda)Q(\lambda),$  
where $P \in GL(n, {\mathbb F} )$ and $Q(\lambda) \in GL(n,  {\mathbb F} [\lambda])$. 
From the last equality we have 
\begin{equation} B^*(\lambda)P^{-1}A(\lambda) = Q(\lambda) \det B(\lambda). \label{T22} \end{equation}

 Write  $B^*(\lambda)$ 
in the form $B^* (\lambda)=d(\lambda )C(\lambda )$ (see the proof of Corollary \ref{CorSSE01})
and 
$\det B(\lambda )=b_0 d(\lambda )s_n(\lambda )$, where $d(\lambda )=s_1(\lambda )s_2(\lambda )\cdots
s_{n-1}(\lambda )$, 
 $C(\lambda) \in M_{n,n}({\mathbb F}[\lambda ])$ and $b_0$ is a nonzero element in $\mathbb F$.
Now rewrite equality (\ref{T22}) as 
$$
d(\lambda)C(\lambda)P^{-1}A(\lambda)=Q(\lambda)d(\lambda)s_n(\lambda)b_0.
$$
This implies that 
 \begin{equation}
      C(\lambda)P^{-1}A(\lambda)=Q(\lambda)s_n(\lambda)b_0. \label{T23}
 \end{equation}

 Put
 $$ P^{-1}=\left[\begin{array} {cccc}
                 v_1 & v_2 & \ldots & v_n \\
                 v_{n+1}&v_{n+2}&\dots& v_{2n}\\\dots& \dots& \dots& \dots \\
                 v_{n^2-n+1}&v_{n^2-n+2}&\dots& v_{n^2}\\
            \end{array}\right]
            $$ 
						             and
                  $$ Q(\lambda)b_0 =W(\lambda)=\left[ \begin{array}{cccc}
                  w_1(\lambda) & w_2(\lambda) & \ldots & w_n(\lambda) \\
                 w_{n+1}(\lambda)&w_{n+2}(\lambda)&\dots& w_{2n}(\lambda)\\ \dots& \dots& \dots& \dots \\
                 w_{n^2-n+1}(\lambda)&w_{n^2-n+2}(\lambda)&\dots& w_{n^2}(\lambda)\\
                             \end{array}\right],
$$
 where $v_j \in  {\mathbb F} $  and $w_j(\lambda) \in  {\mathbb F} [\lambda]$ for
all $ j=1, 2, \dots, n^2$.  Then we can write equality (\ref{T23})  in the 
form (see \cite{LanTis}, Chapter 12)  
\begin{multline}\label{EqSSE222}
\big( C(\lambda)\otimes A^T(\lambda)\big)\cdot \left[ \begin{array}{cccc}
                 v_1, & v_2, & \dots \, , & v_{n^2}
            \end{array}\right]^T = \\ s_n(\lambda) \left[\begin{array}{cccc}
         w_1(\lambda), & w_2(\lambda), & \dots \, , & w_{n^2}(\lambda)
 \end{array}\right]^T.
\end{multline}
Note that $C(\lambda)\otimes A^T(\lambda)=D(\lambda)$. 
 In view of equality (\ref{EqSSE222}) and Proposition \ref{LeSSE01}, we have 
 $
   M[D, s_n] \left[ \begin{array}{cccc}
                 v_1,\! &\! v_2,\! &\! \dots\, , &\! v_{n^2}\!
            \end{array}\right]^T =\bar{0}.$ This implies that
 $ {\rank} M[D, s_n] < n^2.$

Conversely, let ${\rank\,}M[D, s_n] < n^2$ and for matrix 
$M[D, s_n]$ there exists a vector 
$ x_0= \left[ \begin{array}{cccc}
                \! v_1,\! &\! v_2,\! &\! \dots\, , &\! v_{n^2}\!
            \end{array}\right]^T,$ 
  where  $v_j \in  {\mathbb F} $ for all $j=1, 2, \ldots, n^2 $,
such that $ M[D, s_n] x_0=\bar{0}$ and the matrix 
$$V=\left[
\begin{array}{cccc}
                 v_1 & v_2 & \cdots & v_n \\
              v_{n+1}&v_{n+2}&\dots& v_{2n}\\
              \dots& \dots& \dots& \dots \\
                 v_{n^2-n+1}&v_{n^2-n+2}&\dots& v_{n^2}\\
            \end{array} \right]
         $$ 
             is nonsingular.

Dividing the product $C(\lambda )VA(\lambda )$ by 
$I_ns_n(\lambda )$ with residue, we have 
        $$ C(\lambda )VA(\lambda )=s_n(\lambda )Q(\lambda )+R(\lambda ), $$
where $Q(\lambda ), R(\lambda )=
\left[ r_{ij}(\lambda)\right] \in M_{n,n}( {\mathbb F} [\lambda ])$ and 
$\deg R(\lambda ) < \deg s_n(\lambda )$. 
 From the last equality 
we obtain 
$$
  M[D, s_n] x_0=M[{\bf Col\,}R, s_n]=\bar{0},
  $$
	where  
	${\bf Col\,}R(\lambda)=\left[\begin{array}{c c c c c c c}
    r_{11}(\lambda ) & \dots & r_{1n}(\lambda ) & \ldots & r_{n,n-1}(\lambda)  & \dots & r_{nn}(\lambda )
            \end{array}\right] ^T.$  
		In accordance with Proposition \ref{LeSSE01}  ${\bf Col\,}R(\lambda)\equiv \bar{0}$. 
		Thus, $R(\lambda)\equiv 0$ and 
 \begin {equation}\label{EqSSE224}
C(\lambda)VA(\lambda)=s_n(\lambda)Q(\lambda). \end{equation}

Note that  $\det B(\lambda)=b_0d(\lambda)s_n(\lambda)$, where $b_0$ is a nonzero 
element in $ {\mathbb F} $. Multiplying both sides of equality 
(\ref{EqSSE224}) by $b_0d(\lambda)$, we have 
\begin{multline}\label{EqSSE225} \quad b_0d(\lambda)C(\lambda)VA(\lambda)=  B^*(\lambda)V 
A(\lambda) = \\ b_0d(\lambda)s_n(\lambda)Q(\lambda) = Q(\lambda)\det B(\lambda). \qquad
\end{multline}
 From equality (\ref{EqSSE225}) it follows  
  $
     VA(\lambda)=B(\lambda)Q(\lambda).
  $
 Passing to the determinants on both sides of this equality, we obtain 
$\det Q(\lambda)={\rm const}\, \not=0$. Since $Q(\lambda)\in GL(n,  {\mathbb F} [\lambda])$, we 
conclude that matrices $A(\lambda)$ and $ B(\lambda)$ are semi-scalar equivalent. This completes the proof. 
\end{proof}

It may be noted that nonsingular  matrices $A(\lambda), B(\lambda) \in M_{n,n}( {\mathbb F}[\lambda])$ 
are  PS-equivalent if and only if  $A(\lambda)^T$ and $ B(\lambda)^T$ are semi-scalar equivalent. 
Thus, Theorem~\ref{TheSSE01} gives the answer to the question: When are nonsingular matrices $A(\lambda)$ 
and $ B(\lambda)$  PS-equivalent?

In the future $ { {\mathbb F}}= {\mathbb C}$ is the field of complex 
numbers.

\begin{corollary}\label{CorSSEquivT2}
Let nonsingular  matrices $A(\lambda), B(\lambda) \in M_{n,n}( {\mathbb C}[\lambda])$ 
be equivalent and $S(\lambda)= {\diag}(s_1(\lambda),  \dots , s_{n-1}(\lambda),
s_n(\lambda))$ be their  Smith normal form. 
Then $A(\lambda)$ and $B(\lambda)$ 
are semi-scalar equivalent if and only if $${\rank }M[D, s_n] < n^2$$ and the homogeneous 
system of equations $M[D, s_n] x= \bar{0}$ has a solution 
$x=[v_1, v_2, \dots, v_{n^2}]^T$ over 
$ {\mathbb C}$ such that  the matrix 
$$V=
\left[\begin{array}{c c c c }
v_1 & v_2 & \dots \; & \; v_n \\
v_{n+1}& v_{n+2} &\dots \; &\; v_{2n}\\
\dots & \dots & \dots \; &\; \dots \\
v_{n^2-n+1}&v_{n^2-n+2} & \dots & \;v_{n^2}\\
\end{array}\right]$$ is nonsingular.
\end{corollary}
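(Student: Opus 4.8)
The plan is to deduce this corollary directly from Theorem~\ref{TheSSE01}. The only gap between the two statements is the standing hypothesis in the theorem that the largest invariant factor splits completely into linear factors, namely $s_n(\lambda)=(\lambda-\alpha_1)^{k_1}(\lambda-\alpha_2)^{k_2}\cdots(\lambda-\alpha_r)^{k_r}$ with all roots $\alpha_i$ lying in the ground field. Over $\mathbb{C}$ this requirement imposes no restriction whatsoever, so the corollary should follow simply by verifying that hypothesis for free and then quoting the theorem verbatim.

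First I would invoke the Fundamental Theorem of Algebra: the field $\mathbb{C}$ is algebraically closed, hence every monic polynomial in $\mathbb{C}[\lambda]$ of positive degree is a product of linear factors over $\mathbb{C}$. Since $A(\lambda)$ and $B(\lambda)$ are nonsingular, their largest invariant factor $s_n(\lambda)$ is a nonzero monic polynomial, and therefore admits a factorization $s_n(\lambda)=(\lambda-\alpha_1)^{k_1}\cdots(\lambda-\alpha_r)^{k_r}$ with each $\alpha_i\in\mathbb{C}$ and each $k_i\ge 1$. In other words, the extra assumption of Theorem~\ref{TheSSE01} is satisfied automatically when $\mathbb{F}=\mathbb{C}$.

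With that hypothesis in hand, the auxiliary matrix $D(\lambda)$ built from the Smith normal form $S(\lambda)$ is defined exactly as before, and the evaluation matrix $M[D,s_n]$ is well defined because $s_n(\lambda)$ splits. I would then apply Theorem~\ref{TheSSE01} word for word: the matrices $A(\lambda)$ and $B(\lambda)$ are semi-scalar equivalent precisely when $\rank M[D,s_n]<n^2$ and the homogeneous system $M[D,s_n]x=\bar{0}$ has a solution whose reshaping $V$ into an $n\times n$ array is nonsingular. This is exactly the conclusion asserted in the corollary, so nothing further is needed.

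Since the corollary is a pure specialization, there is no real obstacle to overcome; the single point worth emphasizing is that it is precisely the algebraic closedness of $\mathbb{C}$ that eliminates the splitting hypothesis. Over a general infinite field of characteristic zero one cannot drop that assumption, which is exactly why Theorem~\ref{TheSSE01} carries it explicitly and the corollary does not.
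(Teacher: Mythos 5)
Your proposal is correct and matches the paper's (implicit) reasoning exactly: the paper states Corollary~\ref{CorSSEquivT2} without separate proof, treating it as an immediate specialization of Theorem~\ref{TheSSE01} once one observes that over the algebraically closed field $\mathbb{C}$ the invariant factor $s_n(\lambda)$ automatically splits into linear factors. Nothing is missing.
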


\begin{definition}\label{DefSteqMat} Two families of $n\times n$ matrices over the field $ {\mathbb C} $
$$
 {\bf A}=\left\{ A_1, A_2, \dots, A_r  \right\}\quad {\rm and}
   \quad {\bf B}=\left\{B_1, B_2, \ldots, B_r \right\}
   $$
are said to be similar if there exists a matrix $T\in GL(n,
 {\mathbb C} )$ such that $$A_i =TB_i T^{-1} \qquad \mbox {for all}
\qquad i=1,2, \dots, r.$$
\end{definition}
%

The families $\bf{A}$ and $\bf{B}$ we associate with monic matrix polynomials 
$$ A(\lambda)=I_n\lambda^r + A_1 \lambda^{r-1} + A_2\lambda^{r-2}+ \dots + A_r
$$
  and
  $$
   B(\lambda)=I_n\lambda^r +  B_1 \lambda^{r-1} + B_2\lambda^{r-2}+ \dots + B_r
$$
over $ \mathbb C $ of degree $r$ respectively. 
The families $ {\bf A}$ and ${\bf B}$ are similar over 
$ \mathbb C $ if and only if the matrices $ A(\lambda)$ and $B(\lambda)$ are 
semi-scalar equivalent (PS-equivalent) (see \cite{DiasLaf} and \cite{Kaz81}). From 
Theorem~\ref{TheSSE01} and Corollary \ref{CorSSEquivT2} we obtain the following corollary.

\begin{corollary}\label{CorSSEquivT3}  Let $n\times n$
monic matrix polynomials  of degree $r$
$$ A(\lambda)=I_n\lambda^r +\sum_{i=1}^r A_i \lambda^{r-i} \quad \mbox{ and} \quad
 B(\lambda)=I_n\lambda^r +  \sum_{i=1}^r B_i \lambda^{r-i}
$$
  over the field of complex 
numbers $ {\mathbb C}$ be equivalent, 
and let $$S(\lambda)= {\diag}(s_1(\lambda), \dots , s_{n-1}(\lambda), s_n(\lambda)) $$ be 
their Smith normal form.

 The families $ {\bf A}=\left\{ A_1, A_2,
\ldots, A_r
     \right\}$ and    ${\bf B}=\left\{ B_1, B_2,
       \ldots, B_r \right\} \ $
are similar over $ {\mathbb C} $ if and only if ${\rank\,}M[D, s_n] < n^2$  and the
 homogeneous 
system of equations $M[D, s_n] {x}= \bar{0}$ has a solution 
${x}=[v_1, v_2, \dots, v_{n^2}]^T$ over 
$ {\mathbb C}$ such that  the matrix $$V=
\left[\begin{array}{c c c c }
v_1 & v_2 & \dots \; & \; v_n \\
v_{n+1}& v_{n+2} &\dots \; &\; v_{2n}\\
\dots & \dots & \dots \; &\; \dots \\
v_{n^2-n+1}&v_{n^2-n+2} & \dots & \;v_{n^2}\\
\end{array}\right] $$ is nonsingular.
 If $\det V \not=0$, then $A_i =V^{-1}B_iV $ for all $
 i=1, 2, \dots, r.$
\end{corollary}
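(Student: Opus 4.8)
The plan is to reduce the statement to Corollary~\ref{CorSSEquivT2} via the known correspondence between simultaneous similarity of the families and semi-scalar equivalence of the associated monic matrix polynomials, and then to pin down the explicit conjugating matrix by a leading-coefficient argument. First I would observe that $A(\lambda)$ and $B(\lambda)$ are automatically nonsingular: being monic of degree $r$, their determinants are monic polynomials of degree $nr$, hence nonzero. Thus the hypotheses of Corollary~\ref{CorSSEquivT2} are met, and the matrices $D(\lambda)$ and $M[D,s_n]$ are well defined.

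Next I would recall the cited equivalence (from \cite{DiasLaf} and \cite{Kaz81}): the families $\mathbf{A}$ and $\mathbf{B}$ are similar over $\mathbb{C}$ if and only if $A(\lambda)$ and $B(\lambda)$ are semi-scalar equivalent. The elementary direction is that $A_i=TB_iT^{-1}$ for all $i$ gives $A(\lambda)=TB(\lambda)T^{-1}$, a semi-scalar equivalence with $P=T$ and the constant factor $Q(\lambda)=T^{-1}$; the converse is the substantive cited result. Granting this correspondence, the condition $\mathrm{rank}\,M[D,s_n]<n^2$ together with the existence of a solution $x$ of $M[D,s_n]x=\bar 0$ whose reshaping $V$ is nonsingular is exactly the criterion furnished by Corollary~\ref{CorSSEquivT2}, so the biconditional part of the statement transfers immediately.

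It remains to verify the explicit formula $A_i=V^{-1}B_iV$ under $\det V\neq 0$. Here I would invoke the final assertion of Theorem~\ref{TheSSE01}: when $\det V\neq 0$ one has $VA(\lambda)=B(\lambda)Q(\lambda)$ with $Q(\lambda)\in GL(n,\mathbb{C}[\lambda])$. The key step, and the only place where the monic structure is essential, is to show that $Q(\lambda)$ must equal the constant matrix $V$. I would compare degrees and leading coefficients: the left side $VA(\lambda)$ has leading coefficient $V$, which is nonsingular hence nonzero, so its degree is exactly $r$; if $Q(\lambda)$ has degree $m$ with nonzero leading coefficient $Q_0$, then $B(\lambda)Q(\lambda)$ has leading coefficient $I_nQ_0=Q_0\neq 0$ and degree $r+m$. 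Equating degrees forces $m=0$, so $Q(\lambda)\equiv Q_0$ is constant, and equating the coefficients of $\lambda^r$ gives $Q_0=V$. Hence $VA(\lambda)=B(\lambda)V$, i.e. $A(\lambda)=V^{-1}B(\lambda)V$, and reading off the coefficient of $\lambda^{r-i}$ yields $A_i=V^{-1}B_iV$ for every $i$.

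The main obstacle is precisely this collapse of $Q(\lambda)$ to a constant: the general theorem permits $Q(\lambda)$ to be an arbitrary unimodular polynomial matrix, and it is only the combination of $A(\lambda)$ and $B(\lambda)$ being monic of the same degree $r$ with $V$ nonsingular that rules out any genuine $\lambda$-dependence. Once that point is secured, the corollary is a clean specialization of Theorem~\ref{TheSSE01} and Corollary~\ref{CorSSEquivT2}.
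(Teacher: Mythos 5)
Your proposal is correct and follows essentially the same route as the paper, which derives this corollary directly from Theorem~\ref{TheSSE01}, Corollary~\ref{CorSSEquivT2}, and the cited fact that similarity of the families $\mathbf{A}$, $\mathbf{B}$ is equivalent to semi-scalar equivalence of the associated monic matrix polynomials. Your leading-coefficient argument forcing $Q(\lambda)$ to collapse to the constant $V$ is exactly the detail the paper leaves implicit in asserting $A_i=V^{-1}B_iV$, and it is carried out correctly.
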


\section{Illustrative examples }\label{SSEquExam}

To illustrate Theorem \ref{TheSSE01} and Corollary \ref{CorSSEquivT3} consider the following examples.

\begin{example} \label{SSEquExam03}
Matrices
   $$A(\lambda )= \left[ \begin{array}{c r}
              1 & 0 \\
              \lambda^2 + a\lambda & \lambda^4
              \end{array}\right]\quad \mbox{ and } \quad
  B(\lambda )= \left[ \begin{array}{cc}
              1 & 0 \\
              \lambda^2 +b \lambda & \lambda^4 \\
              \end{array}\right]$$  with entries from $ {\mathbb C} [\lambda]$ are equivalent 
							 for all $a, b \in {\mathbb C}$ and 
              $S(\lambda )={\diag }(1, \lambda^4 )$ is their Smith normal form. 
							In what follows $a\not= b$.

          Construct the matrix  
\begin{multline*}
   D(\lambda )=B^*(\lambda )\otimes A^T(\lambda )= \\ \left[ \begin{array}{cccc}
              \lambda^4  & \lambda^6+a\lambda^5  & 0 & 0  \\
           \rule{0pt}{5mm}   0  & \lambda^8 & 0 & 0  \\
           \rule{0pt}{5mm} -(\lambda^2 + b\lambda)   & -(\lambda^4+(a+b)\lambda^3 +ab\lambda^2)
						               & 1  & \lambda^2+ a\lambda  \\
           \rule{0pt}{5mm}   0    & -(\lambda^6 + b\lambda^5)   & 0  & \lambda^4 \\
              \end{array}\right]
\end{multline*}
 and solve the system of equations $M[D, s_2]{x}=\bar{0}$. 
 From this  it follows 
  $$ \left[ \begin{array}{rrcc}
              0   & 0   & 1 & 0  \\
              -b   & 0   & 0 & a  \\
              -2  & -2ab   & 0 & 2  \\
							0   & -6(a+b)   & 0 & 0  \\
                      \end{array}\right]
              \left[ \begin{array}{cccc}
                  v_1 \\ v_2 \\ v_3 \\ v_4
               \end{array}\right] =
                 \left[  \begin{array}{cccc}
                       0 \\ 0 \\ 0 \\ 0
                \end{array}\right]  .
$$
 From this we have, if $a+b\not=0$, then  $ A(\lambda )$ and $B( \lambda )$   
are not semi-scalar equivalent. If $a+b=0$, then $b=-a$ and system of   
equations $M[D, s_2]{x}=\bar{0}$ is solvable. 
 The vector \rule{0pt}{6mm} $\left[  \begin{array}{cccc}
                       1, &   \frac{2 }{a^2}, & 0, &-1
                \end{array}\right]^T$ is a solution of $M[D, s_2]{x}=\bar{0}$ for
								arbitrary  $a\not = 0$. \rule{0pt}{6mm}
 Thus, the matrix  $V=
                \left[ \begin{array}{cc}
                  1 &  \frac{2 }{a^2} \\
                \rule{0pt}{5mm}   0 & -1 \\
                \end{array}\right]
                $ \rule{0pt}{8mm}
			is nonsingular.

So, \rule{0pt}{5mm} if $a\not=0$ and $b = -a$,  then $A(\lambda )=\left[ \begin{array}{c r}
              1 & 0 \\
              \lambda^2 + a\lambda & \lambda^4
              \end{array}\right]$ 
and $B(\lambda )= \left[ \begin{array}{cc}
              1 & 0 \\
              \lambda^2 -a \lambda & \lambda^4 \\
              \end{array}\right]$ are semi-scalar equi\-va\-lent, i.e.,  
$A(\lambda ) = PB(\lambda)Q(\lambda),$ 
where  \rule{0pt}{6mm} 
$$P=V^{-1}=
                \left[ \begin{array}{cc}
                  1 &  \frac{2 }{a^2}\\
                 \rule{0pt}{5mm} 0 & -1 \\
                \end{array}\right]$$ and
$$Q(\lambda)=
\left[ \begin{array}{cc}
              \frac{2 \lambda^2}{a^2}+ \frac{2 \lambda}{a} +1 & \frac{2\lambda^4}{a^2} \\
             \rule{0pt}{5mm}  -\frac{2 }{a^2} &-\frac{2 \lambda^2}{a^2}+\frac{2 \lambda}{a}-1
              \end{array}\right] \in GL(2, {\mathbb C} [\lambda ]).$$
 \end{example}

\begin{example} \label{SSEquExam04} Let 
 $$
 {\bf A}=\left\{ A_1=\left[ \begin{array}{c c}
              -3 & 0 \\
              -4 & 1
              \end{array}\right], \, A_2=\left[ \begin{array}{c r}
              1 & 1 \\
              1 & 1
              \end{array}\right]  \right\}$$  and 
   $${\bf B}=\left\{B_1=\left[ \begin{array}{c c}
              1 & 0 \\
              -4 & -3
              \end{array}\right], \, B_2 =\left[ \begin{array}{c r}
              0 & 0 \\
              1 & 2
              \end{array}\right] \right\}
   $$
be two families of $2\times 2$ matrices over the field $ {\mathbb C}.$ 
Monic matrix polynomials 
   $$A(\lambda )=I_2\lambda^2+A_1\lambda+A_2=\left[ \begin{array}{c c}
              \lambda^2-3\lambda +1 & 1\\
              -4\lambda +1& \lambda^2 +\lambda +1
              \end{array}\right] $$  and  
  $$B(\lambda )= I_2\lambda^2+B_1\lambda+B_2=\left[ \begin{array}{cc}
              \lambda^2 +\lambda  & 0 \\
              -4\lambda +1& \lambda^2 -3\lambda +2 \\
              \end{array}\right]$$  with entries from $ {\mathbb C} [\lambda]$ are equivalent 
							 and 
              $S(\lambda )={\diag }(1, (\lambda^2 -1)(\lambda^2 -2\lambda) )$ is their Smith normal form. 
							It may be noted that $s_1(\lambda)=1$ and $s_2(\lambda )= (\lambda^2 -1)(\lambda^2 -2\lambda))$.
							
 Construct the matrix 
\begin{multline*} 
   D(\lambda )=B^*(\lambda )\otimes A^T(\lambda )= \\ 
	  \rule{0pt}{10mm} \left[ \begin{array}{cc}
              \lambda^2 -3\lambda+2  & 0 \\
              4\lambda -1& \lambda^2 +\lambda \\
              \end{array}\right] \otimes   
									\left[ \begin{array}{c c}
              \lambda^2-3\lambda +1 & -4\lambda +1\\
              1 & \lambda^2 +\lambda +1
              \end{array}\right]= \\ 
												\\	
										\rule{0pt}{15mm}							\left[ \begin{array}{rc}\!
							\! \lambda^2 -3\lambda+2\left[ \begin{array}{c c}
              \lambda^2-3\lambda +1\! & \!-4\lambda +1\\
              1 & \! \lambda^2 +\lambda +1
              \end{array}\right] \! &\! \left[ \begin{array}{c c}
              0 & 0\\
              0 & 0
              \end{array}\right] \\
							 & \\ 
							\! 4\lambda -1\left[ \begin{array}{c c}
              \! \lambda^2-3\lambda +1 & -4\lambda +1\\
              1 & \!\lambda^2 +\lambda +1
              \end{array}\right]& \lambda^2 +\lambda\left[ \begin{array}{c c}
              \!\lambda^2-3\lambda +1 & -4\lambda +1\\
              1 &\! \lambda^2 +\lambda +1
              \end{array}\right] \! \\
							\!\end{array}\! \right] 
							\end{multline*} 
					and solve the system of equations $M[D, s_2]{x}=\bar{0}$. 
					Crossing out zero rows in the matrix $ M [D, s_2] $ 
					and after elementary transformations over the rows of this matrix 
					we get the following system of linear equations
  $$\left[ \begin{array}{c c c c}
              1 & 1  & 0 & 0 \\
							3 & 9 & 2 & 6 \\
              7 & 49 & 6 & 42 \\
              \end{array}\right] 
							\left[ \begin{array}{c}
							x_1 \\ x_2 \\ x_3\\ x_4 \\
							\end{array}\right] =\left[ \begin{array}{c}
							0 \\0 \\0 \\0 \\
							\end{array}\right].$$
							From this system of equations  we obtain  
							$x_1=-x_2=t$, $x_3=0$ і $x_4=t$.
		The matrix   $V=
                \left[ \begin{array}{cc}
                  t &  -t \\
                   0 & t \\
                \end{array}\right]
                $ 
			is nonsingular for nonzero $t\in {\mathbb C}$. 
			Thus, the monic matrix  polynomials $A(\lambda)$ and $B(\lambda)$ are 
			are semi-scalar equivalent. Hence, families of matrices $ {\bf A} $ and $ {\bf B} $
are similar, i.e.,  
									$A_i=V^{-1}B_iV$, $i=1, 2.$ 

\end{example}

\newpage


\begin{thebibliography}{20}



\bibitem{Bar}  Baratchart L.  Un theoreme de factorisation et son application a la representation des systemes cyclique causaux. {\it C. R. Acad. Sci.
Paris, Ser.1: Mathematics}, 1982;295: 223--226.

\bibitem{DiasLaf}   Dias da Silva J.A.,  Laffey T.J.  On simultaneous
similarity of matrices and related questions. {\it Lin. Algebra Appl.}. 1999;291: 167--184.

\bibitem{Dodig2008}  Dodig M.  Eigenvalues of partially prescribed matrices. 
{\it Electron. J. Linear Algebra}. 2008;17: 316--332.

\bibitem{Drozd77}  Drozd Yu. A.  On tame and wild matrix problems. {\it Matrix Problems}, Institute of Mathematics, Ukrainian Academy of Sciences, Kiev. 1977: 104--114. (in Russian)

\bibitem{Drozd80} Drozd Yu. A.  Tame and wild matrix problems. {\it Lecture Notes in Math.} 1980;832:
 242--258.

\bibitem{Frid} Friedland S.
 Matrices: Algebra, Analysis and Applications. World Scientific; 2015.

\bibitem{Kaz81} Kazimirs'kyi P.S. Decomposition of Matrix
Polynomials into factors.  Nau\-ko\-va Dumka, Kyiv; 1981. (in
Ukrainian).

\bibitem{KazBil} Kazimirskii  P.S., Bilonoga D. M.  Semi-scalar
equivalence of polynomial matrices with pairwise
	coprime elementary divisors. {\it Dokl. Akad. Nauk Ukr. SSR, Ser. A}. 1990;  
	4: 8--9.

\bibitem{KazPet} Kazimirs'kyi P.S., Zelisko V.R. and Petrychkovych V.M.
To the question of the similarity of matrix polynomials.
{\it Dokl. Akad. Nauk Ukr. SSR, Ser. A}.  1976;10: 867--878.  (in Ukrainian).
	
	\bibitem{LanTis}   Lancaster P., Tismenetsky M. The theory of matrices.
Second edition with applications.
               Academic Press, New York;~ 1985.

\bibitem{Melsim93}  Mel'nyk O. M. Construction of unital matrix
polynomials with mutually distinct characteristic roots. {\it Ukrainian Math. J.} 1993;45: 76 -- 84.


\bibitem{prokip2012canonical}  Prokip V. M.  Canonical form with respect to semi-scalar equivalence for a matrix pencil
	with nonsingular first matrix.
{\it Ukrainian Math. J.} 2012;63: 1314--1320.

\bibitem{prokip2013normal} Prokip V. M.  On the normal form with respect to the semi-scalar equivalence of polynomial matrices over the field.  {\it J. Math. Sciences}. 2013;194: 149--155.

\bibitem{Serg2000} Sergeichuk V.V.  Canonical matrices for linear matrix problems. {\it Linear algebra and its applications}. 2000;317: 53--102.



\bibitem{shavar2018in} Shavarovs'kyi B.Z.  
On some invariants of polynomial matrices with respect to semi-scalar equivalence.
{\it Appl. Problems of Mech. and Math.} 2018;16: 14--18. (in Ukrainian).




\end{thebibliography}
\end{document}